\newcommand{\Z}{\mathbf{Z}}
\newcommand{\R}{\mathbf{R}}
\newcommand{\C}{\mathbf{C}}
\newcommand{{\ba}}{\bf a}
\newcommand{\ve}{\varepsilon}
\newcommand{\la}{\lambda}
\newcommand{\La}{\Lambda}
\newcommand{\ga}{\gamma}
\newcommand{\pa}{\partial}
\newcommand{\ra}{\rightarrow}
\newcommand{\Om}{\Omega}
\newcommand{\del}{\delta}
\newcommand{\na}{\nabla}
\newcommand{\al}{\alpha}
\newcommand{\be}{\begin{equation}}
\newcommand{\ee}{\end{equation}}
\newcommand{\om}{\omega}
\newtheorem{lem}{Lemma}{\bf}{\it}
{\it}{\rm}
\newtheorem{theorem}{Theorem}
\newtheorem{proposition}{Proposition}
\numberwithin{theorem}{section}
\numberwithin{lem}{section}
\numberwithin{hypothesis}{section}
\numberwithin{equation}{section}
\numberwithin{proposition}{section}
\numberwithin{corollary}{section}
\title[Strong Convergence II] {Strong Convergence to the homogenized limit of elliptic equations with random coefficients II}
\author{Joseph G. Conlon \  and \  Arash Fahim}
\address{ (Joseph G. Conlon): University of Michigan, Department of Mathematics, Ann Arbor,
  MI 48109-1109}
\email{conlon@umich.edu}
\address{ (Arash Fahim): University of Michigan, Department of Mathematics, Ann Arbor,
  MI 48109-1109}
\email{fahimara@umich.edu}
\keywords{Euclidean field theory, pde with random coefficients, homogenization}
\subjclass{81T08, 82B20, 35R60, 60J75}
\begin{document}

\maketitle

\begin{abstract}
Consider a discrete uniformly elliptic divergence form equation on the $d$ dimensional lattice $\Z^d$ with random coefficients. In \cite{cs}  rate of convergence results in homogenization and estimates on the difference between the averaged Green's function and the homogenized Green's function for random environments which satisfy a Poincar\'{e} inequality were obtained. Here these results are extended to certain environments with long range correlations. These environments are simply related via a convolution to environments which do satisfy a  Poincar\'{e} inequality.
\end{abstract}

\section{Introduction.}
In this paper we continue the study of solutions to divergence form elliptic equations with random coefficients begun in \cite{cs}.  In \cite{cs} we were concerned with solutions  $u(x,\eta,\om)$ to the 
equation
\be \label{A1}
\eta u(x,\eta,\om)+\nabla^*{\bf a}(\tau_x\om)\nabla u(x,\eta,\om)=h(x), \quad x\in \Z^d, \ \om\in\Om,
\ee
where  $\Z^d$ is the $d$ dimensional integer lattice and $(\Om,\mathcal{F},P)$ is a probability space equipped with  measure preserving  translation operators  $\tau_x:\Om\ra\Om, \ x\in\Z^d$. The function  ${\bf a}:\Om\ra\R^{d(d+1)/2}$  from $\Om$ to the space of symmetric $d\times d$ matrices  satisfies the quadratic form inequality  
\begin{equation} \label{B1}
\la I_d \le {\bf a}(\om) \le \La I_d, \ \ \ \ \ \om\in\Om,
\end{equation}
where $I_d$ is the identity matrix in $d$ dimensions and $\La, \la$
are positive constants.

It is well known \cite{k,pv,zko} that if the translation operators $\tau_x, \ x\in\Z^d$, are ergodic  on $\Om$ then solutions to the random equation (\ref{A1}) converge to solutions of a constant coefficient equation under suitable scaling. Thus suppose $f:\R^d\ra\R$ is a $C^\infty$ function with compact support and for $\ve$ satisfying $0<\ve\le 1$ let $u_\ve(x,\eta,\om)$ be the solution to (\ref{A1}) with $h(x)=\ve^2f(\ve x), \ x\in\Z^d$.  Then $u_\ve(x/\ve,\ve^2\eta,\om)$ converges with probability $1$ as $\ve\ra 0$ to a function $u_{\rm hom}(x,\eta), \ x\in\R^d$, which is the solution to the constant coefficient elliptic PDE
\be \label{C1}
\eta u_{\rm hom}(x,\eta)-\na {\bf a}_{\rm hom}\na u_{\rm hom}(x,\eta)=f(x), \quad x\in\R^d,
\ee
where the $d\times d$ symmetric matrix ${\bf a}_{\rm hom}$ satisfies the quadratic form inequality (\ref{B1}).   This homogenization result can be viewed as a kind of central limit theorem, and our goal in \cite{cs} was to show that the result can be strengthened for certain probability spaces $(\Om,\mathcal{F},P)$. In particular, we extended a result of Yurinskii\ \cite{y} which gives a  rate of convergence in homogenization:
 \begin{theorem}
 Let  $f:\R^d\ra\R$ be a $C^\infty$ function of compact support,  $u_\ve(x,\eta,\om)$  the corresponding solution to (\ref{A1}) with $h(x)=\ve^2f(\ve x), \ x\in\Z^d,$ and $u_{\rm hom}(x,\eta), \ x\in\R^d,$ the solution to (\ref{C1}).  Then for certain strong mixing environments $(\Om,\mathcal{F},P)$-see discussion below-  there is a constant $\al>0$ depending only on $d,\La/\la$ and a constant $C$ independent of $\ve$ such that
 \be \label{D1}
 \sup_{x\in\ve\Z^d}\langle \ |u_\ve(x/\ve,\ve^2\eta,\cdot)-u_{\rm hom}(x,\eta)|^2 \ \rangle \ \le C\ve^{\al}, \quad {\rm for \ } 0<\ve\le 1.
 \ee
 \end{theorem}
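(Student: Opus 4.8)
The plan is to carry out the two-scale expansion of homogenization in the style of Yurinskii \cite{y} and of \cite{cs}, the genuinely new ingredient being a transfer of the quantitative corrector estimates from the underlying environment that satisfies a Poincar\'e inequality to the convolution environment. Work on the unscaled lattice, writing $v(y)=u_\ve(y,\ve^2\eta,\om)$, so that $\ve^2\eta\,v+\nabla^*{\bf a}(\tau_y\om)\nabla v=\ve^2 f(\ve y)$ on $\Z^d$, and note that (\ref{D1}) is equivalent to $\sup_{y\in\Z^d}\av{|v(y)-u_{\rm hom}(\ve y,\eta)|^2}\le C\ve^\al$. For each direction $e_j$ and each mass $\mu>0$ introduce the massive corrector $\phi_{j,\mu}$, the stationary function on $\Om$ for which $\psi_j(y)\DEF\phi_{j,\mu}(\tau_y\om)$ solves
\[
 \mu\,\psi_j(y)+\nabla^*{\bf a}(\tau_y\om)\bigl(\nabla\psi_j(y)+e_j\bigr)=0,\qquad y\in\Z^d,
\]
together with the associated antisymmetric flux potential $\Phi_{j,\mu}$, and set
\[
 w_\ve(y)=u_{\rm hom}(\ve y,\eta)+\ve\sum_{j=1}^d(\partial_j u_{\rm hom})(\ve y,\eta)\,\psi_j(y).
\]
Since $f\in C_c^\infty$ the function $u_{\rm hom}(\cdot,\eta)$ is smooth with rapidly decaying derivatives, and since $\av{|\phi_{j,\mu}|^2}$ is bounded uniformly in $\mu$ when $d\ge 3$ (the logarithm in lower dimension being absorbed by the choice of $\mu$ below), the correction obeys $\sup_{y}\av{|w_\ve(y)-u_{\rm hom}(\ve y,\eta)|^2}\le C\ve^2$; so it suffices to estimate $v-w_\ve$.

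The second step is to compute the residual $R_\ve\DEF(\ve^2\eta+\nabla^*{\bf a}(\tau_y\om)\nabla)w_\ve-\ve^2f(\ve\cdot)$ and reorganize it so as to expose its divergence-form part. By the corrector equation and the definition of ${\bf a}_{\rm hom}$, the terms in which only one difference operator falls on $u_{\rm hom}$ combine into the divergence of a stationary vector field built from the corrected flux ${\bf a}(\tau_y\om)(\nabla\psi_j(y)+e_j)-{\bf a}_{\rm hom}e_j$; invoking the antisymmetry of $\Phi_{j,\mu}$, one further summation by parts exhibits this contribution as $\nabla^*F_\ve$ with $F_\ve$ made of $\Phi_{j,\mu}$ and $\psi_j$ multiplied by ($\ve^2$ times) second derivatives of $u_{\rm hom}$. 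The remainder $g_\ve$ collects the analogous second-difference terms and the term $\mu\,\ve(\partial_ju_{\rm hom})(\ve\cdot)\psi_j$ produced by the mass. The rapid decay of the derivatives of $u_{\rm hom}$ together with stationarity of $\psi_j$ and $\Phi_{j,\mu}$ then reduce $\av{\|F_\ve\|_{\ell^2(\Z^d)}^2}$ and $\av{\|g_\ve\|_{\ell^2(\Z^d)}^2}$ to second moments of $\psi_j$ and $\Phi_{j,\mu}$, multiplied by explicit powers of $\ve$ and $\mu$.

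The main obstacle is obtaining these corrector and flux-potential moment bounds, together with their rate of convergence as $\mu\to0$ (and that of the associated effective matrices to ${\bf a}_{\rm hom}$), for the convolution environment, which does not itself enjoy a Poincar\'e inequality. Here the hypothesis is used: writing the coefficient field as a convolution of a field that does satisfy a Poincar\'e inequality, one expresses $\phi_{j,\mu}$, $\Phi_{j,\mu}$ and the relevant resolvent/spatial-average quantities in terms of that underlying field, applies the spectral-gap (Efron--Stein) machinery of \cite{cs} there, and transports the resulting bounds back through the convolution. The long-range correlations created by the convolution only degrade the decay of the variances of spatial averages quantitatively; choosing the mass $\mu$ as a suitable power of $\ve$ --- so as to balance the $O(\mu)$ terms in $g_\ve$ against this slower averaging rate and against the approximation error $\phi_{j,\mu}\to\phi_j$ --- leaves a positive exponent depending only on $d$ and $\La/\la$, as claimed.

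Finally I would close the estimate. Testing $(\ve^2\eta+\nabla^*{\bf a}(\tau_y\om)\nabla)(v-w_\ve)=\nabla^*F_\ve+g_\ve$ against $v-w_\ve$, summing over $\Z^d$ and using (\ref{B1}), bounds $\av{\|v-w_\ve\|_{\ell^2(\Z^d)}^2}$ in terms of $\av{\|F_\ve\|^2}$ and $\av{\|g_\ve\|^2}$ with the powers of $\ve$ and $\eta$ dictated by the energy identity, which already yields the averaged-in-space version of (\ref{D1}). To reach the pointwise supremum over $x\in\ve\Z^d$ I would instead use the Green's function representation $(v-w_\ve)(y)=\sum_{z}G_{\ve^2\eta}(y,z;\om)\,(\nabla^*F_\ve+g_\ve)(z)$ together with the moment bounds for the random Green's function $G_{\ve^2\eta}$ and its gradient (established elsewhere in the paper, on the basis of \cite{cs}); these convert the pointwise amplitude bounds on $F_\ve$ and $g_\ve$ into a bound on $\av{|v(y)-w_\ve(y)|^2}$ uniform in $y$, and, combined with the $C\ve^2$ bound on $w_\ve-u_{\rm hom}(\ve\cdot,\eta)$, this gives (\ref{D1}).
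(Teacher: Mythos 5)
Your proposal is a valid outline, but it follows a genuinely different route from the one the paper (together with its companion \cite{cs}) actually takes. The paper's proof is Fourier-analytic and operator-theoretic: the massive corrector is encoded by the function $\Phi(\xi,\eta,\cdot)$ on $\Om$ solving (\ref{A2}) (a Fourier-in-$\xi$ version of the corrector equation), and the rate (\ref{D1}) is reduced, through the machinery of \cite{cs}, to an $L^p(\Z^d)\to L^\infty$ bound with geometric decay in $r$ for the iterated operators $T_{r,\eta}$ arising from the Neumann series for $\Phi$. The paper's only new work here is therefore Proposition 2.1 and Lemma 3.1: a variance estimate obtained by differentiating (\ref{A2}) in the underlying field $\tilde\om$, using the chain rule through $h$, recognizing the result as a solution $u$ of the divergence-form equation (\ref{J2}) whose source carries a factor of $h$, and then applying the Hilbert-space Calder\'on--Zygmund/Meyers estimate together with Young's inequality in $\ell^{p_0}$ to absorb the convolution kernel. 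You instead propose the classical physical-space two-scale expansion — massive correctors $\psi_j$, flux potentials $\Phi_{j,\mu}$, explicit residual $\nabla^*F_\ve+g_\ve$, energy estimate, and a Green's function representation for the pointwise supremum. Both routes rest on the same core mechanism (Poincar\'e for $\tilde\om$ transported through $h*$ via the chain rule, with Young's inequality paying for $h\in\ell^{p_0}$, $p_0<2$), but the paper packages it once and for all as an $\|T_{r,\eta}\|_{p,\infty}$ bound, whereas you would need to package it as uniform-in-$\mu$ moment bounds on $\psi_j$ and $\Phi_{j,\mu}$ plus convergence rates for the approximate homogenized matrix. The operator route delivers the $\sup_x$ in (\ref{D1}) (and the Green's function asymptotics the paper also wants) directly; your route gives the spatial $\ell^2$ version from the energy estimate and then needs the further Green's function argument, which you sketch but leave open. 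The "main obstacle" paragraph in your write-up is exactly the place where the analogue of Proposition 2.1 would have to be carried out; as stated it asserts rather than proves the transfer, and that transfer is the entire technical content of the paper.
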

 It is evident that any environment $\Om$  can be considered to be a set of fields $\om:\Z^d\ra\R^n$ with $n\le d(d+1)/2$, where the translation operators $\tau_x, \ x\in\Z^d$, act as $\tau_x\om(z)=\om(x+z), \ z\in\Z^d,$ and  $\mathbf{a}(\om)=\tilde{\mathbf{a}}(\om(0))$ for some function $\tilde{\mathbf{a}}:\R^n\ra\R^{d(d+1)/2}$.
Yurinskii's assumption on $(\Om,\mathcal{F},P)$ is a quantitative strong mixing condition. Thus let $\chi(\cdot)$ be a positive decreasing function on $\R^+$ such that $\lim_{q\ra\infty}\chi(q)=0$. The quantitative strong mixing condition is given in terms of  the function $\chi(\cdot)$ as follows:  For any subsets $A,B$ of $\Z^d$ and events $\Gamma_A, \ \Gamma_B\subset\Om,$ which depend respectively only on variables $\om(x), \ x\in A,$ and $\om(y), \ y\in B$, then
 \be \label{E1}
 |P(\Gamma_A\cap \Gamma_B)-P(\Gamma_A)P(\Gamma_B)| \ \le  \chi(\inf_{x\in A,y\in B} |x-y| \ ) \ .
 \ee
 In the proof of (\ref{D1}) he requires the function $\chi(\cdot)$ to have power law decay i.e. $\lim_{q\ra\infty} q^\al\chi(q)=0$ for some $\al>0$.   Evidently (\ref{E1}) trivially holds if the $\om(x), \ x\in\Z^d,$ are independent variables. Recently Caffarelli and Souganidis \cite {cafs} have obtained rates of convergence results in homogenization of fully nonlinear PDE under the quantitative strong mixing condition (\ref{E1}). In their case the function $\chi(q)$ is assumed to decay logarithmically in $q$ to $0$, and correspondingly the rate of convergence in homogenization that is obtained is also logarithmic in $\ve$. In their methodology a stronger assumption on the function $\chi(\cdot)$, for example power law decay, does not  yield a stronger  rate of convergence in homogenization.
 
 In \cite{ns2}  Naddaf and Spencer obtained rate of convergence results for homogenization under a different quantitative strong mixing assumption  than (\ref{E1}). Their assumption is that a   Poincar\'{e} inequality holds for the random environment.  Specifically,  consider the measure space   $(\tilde{\Om},\tilde{\mathcal{F}})$ of   vector fields  $\tilde{\om}:\Z^d\ra\R^k$, where  $\tilde{\mathcal{F}}$ is the minimal Borel algebra such that each $\tilde{\om}(x):\tilde{\Om}\ra\R^k$ is Borel measurable,  $x\in\Z^d$.  A probability measure $\tilde{P}$ on $(\tilde{\Om},\tilde{\mathcal{F}})$ satisfies a Poincar\'{e} inequality if   there is a constant $m>0$ such that
 \be \label{F1}
 {\rm Var}[ G(\cdot)] \ \le \ \frac{1}{m^2} \langle \ \|d_{\tilde{\om}} G(\cdot)\|^2 \ \rangle  \quad 
 {\rm for \  all \  } C^1  \ {\rm functions} \  G:\tilde{\Om}\ra\C,
 \ee
where $d_{\tilde{\om}}G(y,\tilde{\om})=\pa G(\tilde{\om})/\pa \tilde{\om}(y), \ y\in\Z^d,$ is the gradient of $G(\cdot)$.  In \cite{ns2} it is assumed that $\tilde{P}$ is translation invariant i.e. the  translation operators $\tau_x, \ x\in\Z^d$, acting by $\tau_x\tilde{\om}(z)=\tilde{\om}(x+z), \ z\in\Z^d,$ are measure preserving, and that the  Poincar\'{e} inequality (\ref{F1}) holds. Rate of convergence results are then obtained provided  $\mathbf{a}(\om)=\tilde{\mathbf{a}}(\tilde{\om}(0))$ in (\ref{A1}), where the function $\tilde{\mathbf{a}}:\R^k\ra\R^{d(d+1)/2}$ is $C^1$  and has bounded derivative, in addition to satisfying (\ref{B1}). In recent work  Gloria and Otto \cite{go1,go2} have developed much further the methodology of Naddaf and Spencer, obtaining optimal rates of convergence in homogenization under the assumption that the environment satisfies a {\it weak} Poincar\'{e} inequality.   This weak  Poincar\'{e} inequality holds for an environment in which the variables $\mathbf{a}(\tau_x\om), \ x\in \Z^d,$ are independent, whereas the inequality (\ref{F1}) in general does not. 

In this paper we shall obtain rate of convergence and related results for homogenization  in environments defined by $\mathbf{a}(\om)=\tilde{\mathbf{a}}(\om(0))$ where $\om:\Z^d\ra\R^n$ is a convolution  $\om(\cdot)=h*\tilde{\om}(\cdot), \ \tilde{\om}\in\tilde{\Om}$. The function $h:\Z^d\ra \R^n\otimes\R^k$ from $\Z^d$ to $n\times k$ matrices is assumed to be $q$ integrable for some $q<2$, and the probability space  $(\tilde{\Om},\tilde{\mathcal{F}},\tilde{P})$ to satisfy the Poincar\'{e} inequality  (\ref{F1}).  Unlike environments which satisfy a Poincar\'{e} inequality, these environments  $\om(\cdot)$ can easily be shown to have long range correlations. In particular, consider the massive field theory environment studied in \cite{cs} consisting of fields $\phi:\Z^d\ra\R$ with measure $\tilde{P}$ formally given by  
\be \label{G1}
\exp \left[ - \sum_{x\in \Z^d} V\left( \na\phi(x)\right)+\frac{1}{2}m^2\phi(x)^2 \right] \prod_{x\in \Z^d} d\phi(x)/{\rm normalization},
\ee
where $V:\R^d\ra\R$ is a uniformly convex function. Then $(\tilde{\Om},\tilde{\mathcal{F}},\tilde{P})$ with measure (\ref{G1}) satisfies the inequality (\ref{F1}). In the Gaussian case when $V(\cdot)$ is quadratic one has that the correlation function $\langle \ \phi(x)\phi(0) \ \rangle=G_{m^2}(x), \ x\in\Z^d,$ where the Green's function $G_\eta(\cdot)$  is the solution to
\be \label{H1}
\eta G_\eta(x)+\na^* V''\na G_\eta(x) \ = \ \del(x), \quad x\in\Z^d.
\ee
Hence the correlation function $\langle \ \phi(x)\phi(0)  \ \rangle$ decays exponentially in $|x|$ as $|x|\ra\infty$. 
Taking $\om(\cdot)=h*\phi(\cdot)$ for some $h\in L^q(\Z^d)$ we have that
\be \label{I1}
\langle \ \om(x)\om(0) \ \rangle \ = \ \sum_{y,y'\in\Z^d} h(x-y)h(-y')G_{m^2}(y-y') \ ,
\ee
and so if $1\le q\le 2$ then $\langle \om(0)^2\rangle<\infty$.  Choosing now $h(z)=1/[1+|z|^{d/2+\ve}],  \ z\in\Z^d,$ for any $\ve>0$ we see from (\ref{I1}) that $\langle \ \om(x)\om(0) \ \rangle \simeq       |x|^{-2\ve}$ as $|x|\ra\infty$. 

The limit as $m\ra 0$ of the measure (\ref{G1}) is a probability measure $\tilde{P}$ on gradient fields $\tilde{\om}:\Z^d\ra\R^d$, where formally $\tilde{\om}(x)=\na\phi(x), \ x\in\Z^d$, a result first shown by Funaki and Spohn \cite{fs}.  This massless field theory measure satisfies a Poincar\'{e} inequality (\ref{F1})  for all $d\ge 1$. In the case $d=1$ the measure has a simple structure since  then the variables $\tilde{\om}(x), \ x\in\Z$, are i.i.d.    For $d\ge 3$ the gradient field theory measure induces a measure on fields $\phi:\Z^d\ra\R$ which is simply the limit of the measures (\ref{G1})  as $m\ra 0$.   For $d=1,2$ the $m\ra 0$ limit of the measures (\ref{G1}) on fields  $\phi:\Z^d\ra\R$ does not exist.   If $d\ge 3$ then $\langle \ \phi(x)\phi(0) \ \rangle \simeq |x|^{-(d-2)}$ as $|x|\ra\infty$ for the massless field theory.    Observe now that 
\be \label{J1}
\phi(x) \ = \ \sum_{y\in\Z^d} [\na G_0(x-y)]^T\na\phi(y) \ = \ h*\tilde{\om}(x), \quad x\in\Z^d,
\ee
where $G_0(\cdot)$ is the Green's function for (\ref{H1}) with $\eta=0, V''=I_d$. Since $h:\Z^d\ra\R^d$ in (\ref{J1})  is $q$ integrable for any $q>d/(d-1)$, the environment  of massless fields $\phi:\Z^d\ra\R$ with $d\ge 3$ satisfies the condition $q<2$ of the previous paragraph. 

Rather than attempt to formulate a general theorem for environments $\om=h*\tilde{\om}$ where 
 $(\tilde{\Om},\tilde{\mathcal{F}},\tilde{P})$  satisfies the Poincar\'{e} inequality (\ref{F1}),  we shall only rigorously prove  that the results obtained in \cite{cs}  hold  for massless fields $\phi:\Z^d\ra\R$ with $d\ge 3$.  In $\S2$ we indicate the generality of our argument by showing that  the proof of Proposition 5.3 of \cite{cs} formally extends to environments  $\om=h*\tilde{\om}$. In $\S3$ we implement the method of $\S2$ to prove the following theorem for massless fields:
 
\begin{theorem} Let $\tilde{{\bf a}}:\R\ra\R^{d(d+1)/2}$  be a $C^1$ function   on $\R$ with values in the space of symmetric $d\times d$ matrices, which satisfies the quadratic form inequality  (\ref{B1}) and has bounded first derivative $D\tilde{{\bf a}}(\cdot)$ so  $\|D\tilde{{\bf a}}(\cdot)\|_{\infty}<\infty$.  For $d\ge 3$ let $(\Om, \mathcal{F}, P)$ be the probability space of massless  fields $\phi(\cdot)$ determined by the limit of the uniformly convex measures (\ref{G1}) as $m\ra 0$, and set ${\bf a}(\cdot)$ in (\ref{A1}) to be ${\bf a}(\phi)=\tilde{{\bf a}}(\phi(0)), \ \phi\in\Om$.   Then Theorem 1.1 holds for the probability space  $(\Om, \mathcal{F}, P)$. Let $G_{\mathbf{a},\eta}(x), \ x\in\Z^d,$ be the averaged Green's function  for the random equation (\ref{A1}) and  $G_{{\bf a}_{\rm hom},\eta}(x), \ x\in\R^d,$ the Green's function for the homogenized equation (\ref{C1}). Then there are constants $\al,\ga>0$ depending only on $d$ and the ratio $\La/\la$ of the constants $\la,\La$ of (\ref{B1}), and a constant $C$ depending only on $\|D\tilde{{\bf a}}(\cdot)\|_{\infty},\La/\la,d$ such that 
\be \label{K1}
|G_{{\bf a},\eta}(x)-G_{{\bf a}_{\rm hom},\eta}(x)|  \le  \frac{C}{\La(|x|+1)^{d-2+\alpha}} e^{-\gamma\sqrt{\eta/\La} |x|}, \    \ x\in\Z^d-\{0\},  
\ee
\be\label{L1}
 |\na G_{{\bf a},\eta}(x)-\na G_{{\bf a}_{\rm hom},\eta}(x)|  \le \frac{C}{\La(|x|+1)^{d-1+\alpha}} e^{-\gamma\sqrt{\eta/\La} |x|},   \    \ x\in\Z^d-\{0\},  
 \ee
 \be \label{M1}
 |\na\na G_{{\bf a},\eta}(x)-\na\na G_{{\bf a}_{\rm hom},\eta}(x)|  \le \frac{C}{\La(|x|+1)^{d+\alpha}} e^{-\gamma\sqrt{\eta/\La} |x|}  \    \ x\in\Z^d-\{0\},
\ee
provided $0<\eta\le \La$. 
\end{theorem}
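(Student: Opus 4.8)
The plan is to reduce Theorem 1.2 to the results of \cite{cs} for environments satisfying a Poincar\'{e} inequality, by exploiting the convolution representation $\phi = h*\tilde{\om}$ in \eqref{J1}, where $\tilde{\om} = \na\phi$ is the gradient field measure, which does satisfy \eqref{F1} for $d\ge 3$. The starting point is the formal computation of $\S2$, where Proposition 5.3 of \cite{cs} is extended to environments $\om = h*\tilde{\om}$: the key observation is that differentiation with respect to $\tilde{\om}(y)$ and with respect to $\om(x)$ are related through $h$ by the chain rule, so that $d_{\tilde\om}G$ applied to a functional of $\om$ produces a convolution of $d_\om G$ against $h$. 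Because $h\in L^q(\Z^d)$ with $q<2$ (here $q>d/(d-1)$, so $q<2$ precisely when $d\ge 3$), the relevant $\ell^2$-type norms that appear in the Naddaf--Spencer / \cite{cs} estimates remain finite, with the loss of decay controlled by a power determined by $q$. First I would write out this chain-rule identity carefully and identify the exact exponent $\al>0$ (depending on $d$, $\La/\la$, and the integrability of $h$) that survives.

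Next I would revisit the proof of Theorem 1.1 as carried out in \cite{cs}. The architecture there is: (i) a second-order perturbation/duality expansion of $u_\ve - u_{\rm hom}$ in terms of the corrector field; (ii) estimates on the corrector and on mixed derivatives of the Green's function for \eqref{A1}, obtained from the Poincar\'{e} inequality via a Helffer--Sj\"{o}strand / semigroup representation; (iii) assembling these into the bound \eqref{D1}. My plan is to run the same three steps but with every application of \eqref{F1} replaced by its convolved counterpart from $\S2$: wherever \cite{cs} estimates $\langle \|d_\om F\|^2\rangle$, I instead estimate $\langle\|d_{\tilde\om} F\|^2\rangle$ and push the convolution with $h$ through, absorbing the $L^q$ norm of $h$ into the constants and accepting a slightly smaller decay exponent. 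The massless field Green's function $G_0$ obeys $|\na G_0(x)|\lesssim (|x|+1)^{-(d-1)}$, which is exactly the input needed to control the tails of $h$ in \eqref{J1}; this is where the restriction $d\ge 3$ is essential, since for $d=1,2$ the field $\phi$ itself is not defined.

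For the Green's function estimates \eqref{K1}--\eqref{M1}, I would follow the corresponding theorem in \cite{cs} (the analogue of \eqref{K1}--\eqref{M1} for Poincar\'{e} environments), which derives the pointwise bounds on $G_{{\bf a},\eta} - G_{{\bf a}_{\rm hom},\eta}$ and its discrete derivatives from a Fourier/resolvent representation together with the corrector estimates. The exponential factor $e^{-\ga\sqrt{\eta/\La}|x|}$ comes, as in \cite{cs}, from the mass term $\eta$ in \eqref{A1} and is unaffected by the convolution; the polynomial prefactor with the extra $\al$ in the exponent is the homogenization gain, and it degrades by exactly the amount dictated by the $L^q$ integrability of $h$, which is why $\al$ here depends on $d$ and $\La/\la$ only (the integrability exponent of the $\na G_0$ kernel being itself a function of $d$). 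The scaling in \eqref{L1} and \eqref{M1} relative to \eqref{K1} is the standard one extra power of $(|x|+1)$ per discrete derivative, obtained by differentiating the resolvent kernel and using the decay of $\na G_{{\bf a}_{\rm hom},\eta}$ and the corrector gradient estimates.

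The main obstacle I anticipate is step (ii): controlling the mixed second derivatives of the Green's function for \eqref{A1} after the convolution. In the Poincar\'{e} setting of \cite{cs} these derivatives decay with a rate tied directly to the spectral gap $m$; once one works with $\om = h*\tilde\om$, the naive bound is a convolution of that decay against $h$, and one must check that $h\in L^q$ with $q<2$ is \emph{enough} integrability to keep the resulting double sums convergent and to retain a \emph{positive} exponent $\al$ rather than just $\al\ge 0$. This is delicate because the corrector estimates in \cite{cs} are borderline in $d=3$; I expect to need the strict inequality $q<2$ (equivalently $d\ge 3$, so that $d/(d-1)<2$) in an essential way, and possibly a small-$\ve$ argument choosing the decay rate of $h$ as in the massless case where $h=\na G_0$ gives $q$ any value exceeding $d/(d-1)$. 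Verifying that the bookkeeping closes — that the convolution does not consume the entire homogenization gain — is the crux of the proof.
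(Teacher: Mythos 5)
Your high-level idea---use $\phi=h*\tilde\om$ with $h=\na G_0$, transfer the Poincar\'e inequality for the gradient field $\tilde\om$ to the field $\phi$ via the chain rule, and use $h\in L^q$ with $q>d/(d-1)$ (hence $q<2$ for $d\ge3$) to keep the estimates closed---is exactly the mechanism behind Proposition~2.1 of the paper, and it is the right mechanism. But the actual proof of Theorem~1.2 in \S3 departs from your plan in two places where, as stated, your plan would not go through.

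First, you propose to "revisit the proof of Theorem~1.1 in \cite{cs}" and rerun the whole architecture (perturbation expansion, corrector estimates, resolvent/Fourier representation for the Green's function differences) with $d_\om$ replaced by $d_{\tilde\om}$ throughout. The paper's proof is far more economical, and this economy is essential: it observes that \emph{all} of the conclusions of Theorem~1.2---both \eqref{D1} and the Green's function bounds \eqref{K1}--\eqref{M1}---follow from \cite{cs} once one re-establishes a single operator bound, the conclusion of Lemma~5.1 of \cite{cs}, namely the estimate $\|T_{r,\eta}\|_{p,\infty}\le C r\|D\tilde{\bf a}\|_\infty(1-\la/\La)^{r/2}/(\la\La)$ for $1\le p\le p_0$ (this is Lemma~3.1). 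The operator $T_{r,\eta}$ encodes the $r$-th term in a convergent Neumann-type expansion, and the geometric factor $(1-\la/\La)^{r/2}$ comes from the iterated functions $F_r$ defined by \eqref{M3} with $\|\pa_\xi F_r\|\le(1-\la/\La)^{r-1}|v|$. Your sketch never identifies this reduction or the iterative hierarchy $F_r, u_{r,j}, f_{r,j}$; without it you would have to redo all of \cite{cs} line by line, and you would also lose the precise way in which Meyer's theorem has to enter \emph{uniformly in $r$} (the choice of $q<2$ with $[1+\del(q)](1-\la/\La)\le(1-\la/\La)^{1/2}$ in \eqref{X3}--\eqref{Y3} is what keeps the geometric sum convergent).

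Second, and more seriously, you apply the Poincar\'e inequality directly to the massless measure via the chain rule and the identity $\phi = h*\tilde\om$. That identity is correct, but it is an infinite-volume statement and the massless field measure is itself only defined as a limit; the formal argument of \S2 is not rigorous in this setting. What the paper does instead is work on a finite periodic box $Q$ with a massive measure $P_{Q,m}$ ($m>0$), where the Brascamp--Lieb inequality gives the concrete Poincar\'e-type bound \eqref{D3} involving both $d_{\tilde\om}$ (with constant $2/\la$) and $d_\phi$ (with constant $2/m^2$, which blows up as $m\to0$). On the box one does not have $\phi = h_Q^T*\tilde\om$ exactly; one has the representation \eqref{F3}--\eqref{G3} $\phi = h_Q^T*\tilde\om + k_Q*\phi$ with a correction kernel $k_Q$ whose $L^q$-norm decays like a negative power of the side length $L$. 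The whole point is that the dangerous $1/m^2$ term in \eqref{D3} is multiplied by $\|k_Q\|_q^2 \lesssim L^{-a(q)}$, so that after taking $Q\to\Z^d$ and \emph{then} $m\to0$ (using Propositions~5.1 and 6.1 of \cite{cs} to justify the limits) the $d_\phi$ contribution disappears entirely and the estimate \eqref{K3} depends only on $\la$. Your plan contains no analogue of this double-limit bookkeeping, and without it the application of a Poincar\'e inequality to the massless $\phi$-field is not justified: as you yourself remark, there is no $m\to0$ Poincar\'e inequality for $\phi$ itself. In short, you have the right kernel and the right integrability condition, but you are missing the finite-volume massive regularization that makes the argument rigorous and the reduction to the single Lemma~5.1 estimate that makes it finite.
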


\vspace{.1in}

\section{Variance Estimate on the Solution to a PDE on $\Om$}
Following $\S5$ of \cite{cs} we consider the solution $\Phi(\xi,\eta,\om)$ to the equation
\be \label{A2}
\eta\Phi(\xi,\eta,\om)+\pa_\xi^*{\bf a}(\om)\pa_\xi\Phi(\xi,\eta,\om)=-\pa^*_\xi {\bf a}(\om), \quad \eta>0, \ \xi\in \R^d, \ \om\in\Om,
\ee
and let $P$ denote the projection orthogonal to the constant function. Then our generalization of Proposition 5.3 of \cite{cs}  is as follows:
\begin{proposition}
Suppose ${\bf a}(\cdot)$ in (\ref{A2}) is given by   $\mathbf{a}(\om)=\tilde{\mathbf{a}}(\om(0))$ where $\tilde{\mathbf{a}}:\R^n\ra\R^{d(d+1)/2}$ is a $C^1$ $d\times d$ symmetric matrix valued function  satisfying the quadratic form inequality (\ref{B1}) and $\|D\tilde{a}(\cdot)\|_\infty<\infty$.  The random field $\om:\Z^d\ra\R^n$ is a convolution  $\om(\cdot)=h*\tilde{\om}(\cdot)$ of an $n\times k$ matrix valued function $h:\Z^d\ra\R^n\otimes\R^k$ and a random field $\tilde{\om}:\Z^d\ra\R^k$.  The function $h$ is assumed to be $p_0$ summable for some $p_0$ with $1\le p_0<2$ and the probability space $(\tilde{\Om},\tilde{\mathcal{F}},\tilde{P})$ of the fields $\tilde{\om}:\Z^d\ra\R^k$ to satisfy the Poincar\'{e} inequality (\ref{F1}). Then there exists $p_1$ depending only on $d,\La/\la,p_0$ and satisfying $1<p_1< 2$, such that for $g\in L^p(\Z^d,\C^d\otimes\C^d)$ with $1\le p\le p_1$ and $v\in\C^d$,
\be \label{B2}
\|P\sum_{x\in\Z^d} g(x)\pa_\xi\Phi(\xi,\eta,\tau_x\cdot)v\|   \ \le \ \frac{C\|D\tilde{a}(\cdot)\|_\infty|v|}{m\La}\|h\|_{p_0} \|g\|_p \ ,
\ee
where $C$ depends only on $d,n,k,\La/\la,p_0$. 
\end{proposition}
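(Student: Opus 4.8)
The plan is to use the Poincar\'{e} inequality (\ref{F1}) for the underlying environment $(\tilde{\Om},\tilde{\mathcal{F}},\tilde{P})$ applied to the random variable $G(\tilde{\om}) = \sum_{x\in\Z^d} g(x)\pa_\xi\Phi(\xi,\eta,\tau_x\om)v$, where $\om = h*\tilde{\om}$. Since $P G$ has mean zero, its $L^2$ norm squared equals its variance, so (\ref{F1}) bounds it by $m^{-2}\langle \|d_{\tilde{\om}} G\|^2\rangle$. Thus the core of the argument is to compute the Malliavin-type derivative $d_{\tilde{\om}}G(y,\tilde{\om}) = \pa G/\pa\tilde{\om}(y)$ and estimate its $\ell^2(y)$ norm in $L^2(\tilde P)$. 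By the chain rule, differentiating in $\tilde{\om}(y)$ passes through the convolution: $\pa/\pa\tilde{\om}(y)$ acting on anything depending on $\om$ only through $\om(z)=\sum_w h(z-w)\tilde{\om}(w)$ produces $\sum_z h(z-y)^T \,\pa/\pa\om(z)$. Hence the problem reduces to controlling $\pa_{\om(z)}\pa_\xi\Phi(\xi,\eta,\tau_x\om)$ and then handling the extra convolution by $h$ via Young's inequality, which is exactly where the hypothesis $\|h\|_{p_0}<\infty$ with $p_0<2$ enters and forces the loss from exponent $p$ down to some $p_1<2$.

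The second step is the $\om$-derivative of $\pa_\xi\Phi$. Differentiating the defining equation (\ref{A2}) with respect to $\om(z)$, one gets that $w_z(\xi) \DEF \pa_{\om(z)}\pa_\xi\Phi(\xi,\eta,\om)$ satisfies an equation of the same elliptic type, with a right-hand side that is a discrete divergence of a term supported (through $\pa\mathbf{a}(\om)/\pa\om(z) = D\tilde{\mathbf a}(\om(0))\,\del_{0}(z)$-type localization, shifted appropriately) near the origin and proportional to $\|D\tilde{\mathbf a}\|_\infty\big(\pa_\xi\Phi + I\big)$. One then invokes the $L^p$ resolvent/Calder\'on--Zygmund-type estimates for the operator $\eta + \pa_\xi^*\mathbf a(\om)\pa_\xi$ on $\R^d$ (uniform in $\om$, depending only on $d,\La/\la$) — these are precisely the ingredients used in $\S5$ of \cite{cs} — to bound the relevant $L^p(\xi)$ and summed-over-$x$ norms of $w_z$, gaining the decay in $z$ needed so that, after reassembling $\sum_x g(x)\,(\cdot)$ and the convolution with $h$, the bounds are summable. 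The factor $1/\La$ comes from the ellipticity normalization and $1/m$ from the Poincar\'{e} constant.

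The third step is the bookkeeping of exponents. Schematically $\langle\|d_{\tilde\om}G\|^2\rangle^{1/2} \lesssim \|D\tilde{\mathbf a}\|_\infty |v|\,\La^{-1}\,\|g\|_p\,\|h\|_{p_0}\cdot(\text{a convolution/norm factor})$, where the convolution factor is finite provided $p$ and $p_0$ are related by the Young's-inequality condition together with the $L^p\to L^{p'}$ mapping properties of the homogenization correctors; choosing $p_1\in(1,2)$ small enough (depending on how close $p_0$ is to $2$ and on the Meyers exponent attached to $\La/\la$) makes everything converge. The main obstacle, and where the real work lies, is establishing the uniform-in-$\om$ $L^p$ bounds on the second-derivative object $\pa_{\om(z)}\pa_\xi\Phi$ with the correct decay in $|z|$ — i.e. showing that differentiating $\Phi$ in a single environment coordinate produces something as well behaved in $L^p$ as the Green's function gradient it mimics — since the coefficient field $\mathbf a(\om)$ is merely bounded and measurable, so one cannot use any regularity beyond the Meyers/Calder\'on--Zygmund $L^p$ theory, and the exponent $p$ must be tracked carefully through each application so as not to exceed the admissible range before the final convolution with $h$.
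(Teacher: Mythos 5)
Your high-level strategy --- apply the Poincar\'{e} inequality, push the derivative through the convolution with $h$ via the chain rule, differentiate the defining equation (\ref{A2}), then invoke Meyers/Calder\'{o}n--Zygmund and Young's inequality --- matches the paper. You also correctly locate the provenance of the factor $1/m$ (Poincar\'{e}) and the exponent $p_1<2$ (Meyers plus the $p_0<2$ summability of $h$). However, the step you yourself flag as ``where the real work lies'' is precisely where the proposal stalls: you propose to first prove uniform $L^p$ bounds with explicit decay in $|z|$ for the object $w_z=\pa_{\om(z)}\pa_\xi\Phi$, and then reassemble the convolution with $h$ and the sum against $g$. That is a Green's-function-type decay estimate on the space $\Om$, and it is not clear how one extracts the needed pointwise or $\ell^p$-in-$z$ decay from the Meyers/Calder\'{o}n--Zygmund theory alone, since those tools give $L^q$ mapping bounds rather than kernel decay; nor is the operator in question the spatial one on $\Z^d$ where Meyers' theorem is usually applied.

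The paper avoids this obstacle by a reformulation that your proposal does not anticipate. Rather than estimating $w_z$ directly and then convolving, it convolves first: it introduces $u(z,\om)=e^{-iz\cdot\xi}\sum_y [d_\om\Phi(y,\xi,\eta,\tau_z\om)v]\,h(y+z)$ (equation (\ref{F2})) and observes, via translation covariance, that $\na u(x-z,\cdot)$ is exactly the quantity appearing after the chain rule in (\ref{E2}). Crucially, $u(\cdot,\om)$ satisfies a \emph{spatial} divergence-form equation on $\Z^d$, $\eta u+\na^*\tilde{\mathbf a}(\om(\cdot))\na u=-\na^*f$ (equation (\ref{J2})), with an explicit source $f(z,\om)= D\tilde{\mathbf a}(\om(z))\{v+\pa_\xi\Phi(\xi,\eta,\tau_z\om)v\}e^{-iz\cdot\xi}h(z)$. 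The decay in $z$ you were trying to establish for $w_z$ is now free: it sits in the factor $h(z)$, so $\|f\|_q\lesssim\|D\tilde{\mathbf a}\|_\infty|v|\|h\|_q$. Meyers' theorem (in the Hilbert-space-valued Calder\'{o}n--Zygmund form) applied to the genuine spatial operator $\na^*\tilde{\mathbf a}(\om(\cdot))\na$ on $\Z^d$ then gives $\|\na u\|_q\le 2\|f\|_q/\la$ for $q\ge q_0$ with $q_0<2$, and a single application of Young's inequality in (\ref{H2}) finishes the proof with $p_1=2q_1/(3q_1-2)$, $q_1=\max[p_0,q_0]$. In short: the missing idea is to transfer the problem from an estimate on the $\Om$-space Green's function to an estimate on the solution of an auxiliary spatial PDE whose right-hand side already carries the $h$-decay; without that device your outline does not close.
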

\begin{proof}
From (\ref{F1}) we have that
\be \label{C2}
\|P\sum_{x\in\Z^d} g(x)\pa_\xi\Phi(\xi,\eta,\tau_x\cdot)v\|^2   \ \le \ 
\frac{1}{m^2}\sum_{z\in \Z^d} \| \ \frac{\pa}{\pa \tilde{\om}(z)}  \ \sum_{x\in\Z^d} g(x)\pa_\xi\Phi(\xi,\eta,\tau_x\cdot)v\|^2 \ . 
\ee
From the chain rule we see that
\be \label{D2}
\frac{\pa}{\pa \tilde{\om}(z)} \pa_\xi\Phi(\xi,\eta,\tau_x\cdot)v \ = \  \sum_{y\in\Z^d} 
\left[\frac{\pa}{\pa \om(y)} \pa_\xi\Phi(\xi,\eta,\tau_x\cdot)v\right] h(y-z) \ .
\ee
Hence using the translation invariance of the probability measure $\tilde{P}$ on $\tilde{\Om}$ we conclude from (\ref{C2}), (\ref{D2}) that
\be \label{E2}
\|P\sum_{x\in\Z^d} g(x)\pa_\xi\Phi(\xi,\eta,\tau_x\cdot)v\|^2   \ \le \ 
\frac{1}{m^2}\sum_{z\in \Z^d} \left\|   \ \sum_{x\in\Z^d} g(x)\sum_{y\in\Z^d} \left[ \tau_{-z}
\frac{\pa}{\pa \om(y)} \pa_\xi\Phi(\xi,\eta,\tau_x\cdot)v\right]h(y-z) \right\|^2 \ . 
\ee
We define a function $u:\Z^d\times\Om\ra\C^k$ by 
\be \label{F2}
u(z,\om) \ = \ e^{-iz\cdot\xi}\sum_{y\in\Z^d} \left[
 d_\om\Phi(y,\xi,\eta,\tau_z\om)v\right]h(y+z) \ ,
\ee
where $ d_\om\Phi(\cdot,\xi,\eta,\om)v:\Z^d\ra\C^n$ is the gradient of $\Phi(\xi,\eta,\om)v$ with respect to $\om\in\Om$.
Observe now  that
\be \label{G2}
\na u(x-z,\om) \ = \ e^{i(z-x).\xi}\sum_{y\in\Z^d} \left[ \tau_{-z}
\frac{\pa}{\pa \om(y)} \pa_\xi\Phi(\xi,\eta,\tau_x\om)v\right]h(y-z) \ ,
\ee
whence (\ref{E2}) becomes
\be \label{H2}
\|P\sum_{x\in\Z^d} g(x)\pa_\xi\Phi(\xi,\eta,\tau_x\cdot)v\|^2   \ \le \ 
\frac{1}{m^2}\sum_{z\in \Z^d} \|   \ \sum_{x\in\Z^d} g(x)e^{i(x-z)\cdot\xi}\na u(x-z,\cdot) \ \|^2 \ .
\ee

Next we take the gradient of  equation (\ref{A2}) with respect to $\om(\cdot)$.  Using the notation of \cite{cs} we have that
\begin{multline} \label{I2}
\eta \   d_\om\Phi(y,\xi,\eta,\om)v+D_\xi^*\tilde{{\bf a}}(\om(0))D_\xi  \  d_\om\Phi(y,\xi,\eta,\om)v \\
 = \ -D_\xi^*[ \ \del(y) D\tilde{{\bf a}}(\om(0))\{v+\pa_\xi \Phi(\xi,\eta,\om)v\} ] \quad {\rm for \ } y\in\Z^d,\om\in\Om.
\end{multline}
Evidently (\ref{I2}) holds with $\om\in\Om$ replaced by $\tau_z\om$ for any $z\in\Z^d$. We now multiply (\ref{I2}) with $\tau_z\om$ in place of $\om$ on the right by $e^{-iz\cdot\xi}h(y+z)$ and sum with respect to $y\in\Z^d$. It then follows from (\ref{F2}) that 
\be \label{J2}
\eta \ u(z,\om)+\na^*\tilde{{\bf a}}(\om(z))\na u(z,\om) \ = \ -\na^*f(z,\om) \ ,
\ee
where the function $f:\Z^d\times\Om\ra\C^d\otimes\C^k$ is given by the formula
\be \label{K2}
f(z,\om) \ = \  D\tilde{{\bf a}}(\om(z))\{v+\pa_\xi \Phi(\xi,\eta,\tau_z\om)v\} e^{-iz\cdot\xi} h(z) \ .
\ee
Now $\pa_\xi \Phi(\xi,\eta,\cdot)v\in L^2(\Om,\C^d)$ and  $\|\pa_\xi \Phi(\xi,\eta,\cdot)v\|_2\le \La|v|/\la$. Hence if $h\in L^2(\Z^d,\R^n\otimes\R^k)$ then the function $f$ is in $L^2(\Z^d\times\Om,
\C^d\otimes\C^k)$ and $\|f\|_2\le \|D\tilde{{\bf a}}(\cdot)\|_\infty(1+\La/\la)|v|\|h\|_2$.  We see  from (\ref{J2}) that if  $f\in L^2(\Z^d\times\Om,
\C^d\otimes\C^k)$ then $\na u$ is in $L^2(\Z^d\times\Om,\C^d\otimes\C^k)$ and $\|\na u\|_2\le \|f\|_2/\la$. It follows then from (\ref{H2}) and Young's inequality that  (\ref{B2}) holds with $p_0=2$ and $p=1$.  

To prove the inequality for some $p>1$ we use Meyer's theorem \cite{m}. Thus for any $1<q<\infty$ we  consider the function $f$ as a mapping $f:\Z^d\ra  L^2(\Om,\C^d\otimes\C^k)$  with norm defined by
\be \label{L2}
\|f\|_q^q \ = \  \sum_{z\in\Z^d} \|f(z,\cdot)\|_2^q \ ,
\ee
where $\|f(z,\cdot)\|_2$ is the norm of $f(z,\cdot)\in L^2(\Om,\C^d\otimes\C^k)$. Noting that the Calderon-Zygmund theorem applies to functions with range in a Hilbert space \cite{stein}, we conclude that there exists $q_0$ depending only on $d,\La/\la$ with $1<q_0<2$ such that if  $\|f\|_{q_0}<\infty$ then $\|\na u\|_q\le2\|f\|_q/\la$ for $q_0\le q\le 2$. If $h$ is $p_0$ integrable with $p_0<2$ we can take $\max[p_0,q_0]=q_1\le  q\le 2$. It follows  again from (\ref{H2}) and Young's inequality that (\ref{B2}) holds with $p_1=2q_1/(3q_1-2)$.  
\end{proof}

\vspace{.1in}

\section{Proof of Theorem 1.2}
It will be sufficient for us to establish the conclusion of Lemma 5.1 of \cite{cs} for the massless field theory environment $(\Om,\mathcal{F},P)$ of Theorem 1.2. Using the notation of \cite{cs} we have the following:
\begin{lem}
Let $(\Om,\mathcal{F},P)$ be an environment of massless fields $\phi:\Z^d\ra\R$ with $d\ge 3$, and $\tilde{\mathbf{a}}:\R\ra\R^{d(d+1)/2}$ be  as in the statement of Theorem 1.2. Set  $\mathbf{a}(\phi)=\tilde{\mathbf{a}}(\phi(0)), \ \phi\in\Om$.  Then there exists $p_0(\La/\la)$ with $1<p_0(\La/\la)<2$ depending only on $d$ and $\La/\la$, and a constant $C$ depending only on $d$ such that
\be \label{A3}
\|T_{r,\eta}\|_{p,\infty} \ \le \ \frac{C r\|D\tilde{\mathbf{a}}(\cdot)\|_\infty}{\la\La}(1-\la/\La)^{r/2} \quad {\rm \ for \ } 1\le p\le p_0(\La/\la) \ .
\ee
\end{lem}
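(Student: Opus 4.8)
The plan is to reduce the bound on the operator norm $\|T_{r,\eta}\|_{p,\infty}$ to an iterated application of the variance estimate of Proposition 2.1, exactly as in Lemma 5.1 of \cite{cs}, but now carried out in the $L^p$-scale rather than in $L^2$. Recall that in \cite{cs} the operator $T_{r,\eta}$ is built by composing $r$ copies of a basic operator whose norm is controlled by the variance estimate; the factor $(1-\la/\La)^{r/2}$ comes from the fact that each composition picks up a contraction constant strictly less than $1$, coming from the ellipticity ratio, while each application of the variance bound contributes a factor $C\|D\tilde{\mathbf a}(\cdot)\|_\infty/(m\La)$ times the relevant norm of $h$ and of the test function. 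So the first step is to recall the precise definition of $T_{r,\eta}$ from \cite{cs} and to identify, at each stage of the iteration, the role played by $P\sum_x g(x)\pa_\xi\Phi(\xi,\eta,\tau_x\cdot)v$.

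Next I would invoke Proposition 2.1 with the choice of $p_0$ dictated by the massless environment: from \eqref{J1}, the massless field $\phi=h*\tilde\om$ with $\tilde\om=\na\phi$ has $h=\na G_0$, which is $q$-integrable for every $q>d/(d-1)$, and since $d\ge 3$ we have $d/(d-1)<2$, so we may fix some $p_0$ with $d/(d-1)<p_0<2$; Proposition 2.1 then produces $p_1=2q_1/(3q_1-2)\in(1,2)$ depending only on $d,\La/\la$, and we set $p_0(\La/\la)$ in the statement to be (essentially) this $p_1$. The constant $m$ appearing in \eqref{B2} is the Poincaré constant of the gradient field theory measure, which for the massless field is a fixed quantity depending only on the uniform convexity bounds on $V$, i.e. on $\la,\La$; combined with $\|h\|_{p_0}<\infty$ (a $d$-dependent constant), the prefactor in \eqref{B2} becomes $C\|D\tilde{\mathbf a}(\cdot)\|_\infty|v|/\La$ up to a constant depending only on $d,\La/\la$, which matches the form of \eqref{A3}. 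The remaining work is bookkeeping: track how the $r$-fold composition in the definition of $T_{r,\eta}$ turns $r$ applications of \eqref{B2} into the single factor $r$ in the numerator of \eqref{A3} (this is the standard "one loses a polynomial factor in $r$ when iterating a contraction with a constant-order multiplier" phenomenon from \cite{cs}), and to check that the contraction constant per step is indeed $(1-\la/\La)^{1/2}$.

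The main obstacle, and the reason the $d\ge 3$ hypothesis and the convolution structure are essential, is that the massless environment does \emph{not} itself satisfy a Poincaré inequality of the form \eqref{F1} in the field $\phi$ — it only satisfies one in the gradient field $\tilde\om=\na\phi$. So one cannot apply the Naddaf–Spencer / \cite{cs} machinery directly to $\phi$. The device that rescues the argument is precisely Proposition 2.1: by the chain rule \eqref{D2}–\eqref{G2}, differentiating with respect to $\tilde\om$ instead of $\om$ introduces the convolution kernel $h$, and the resulting auxiliary function $u$ of \eqref{F2} satisfies the \emph{same type} of divergence-form equation \eqref{J2} with a right-hand side \eqref{K2} carrying one factor of $h$; the Calderón–Zygmund / Meyer $L^q$-regularity theory then absorbs the $h$-convolution at the cost of shrinking the exponent range from $(1,2]$ down to $(1,p_1]$. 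Thus the real content is checking that each of the $r$ steps in the construction of $T_{r,\eta}$ can be run with this substituted estimate — i.e. that the test functions $g$ generated along the iteration stay in $L^p$ for $p\le p_0(\La/\la)$ and that their $L^p$-norms compose multiplicatively with the claimed contraction factor. Once that compatibility is verified, \eqref{A3} follows by the same summation-over-$r$ argument as in \cite{cs}, and the rest of Theorem 1.2 — the Green's function estimates \eqref{K1}–\eqref{M1} and the homogenization rate \eqref{D1} — is deduced verbatim from \cite{cs} with Lemma 5.1 there replaced by the present Lemma 3.1.
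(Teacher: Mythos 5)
Your high-level picture is right — differentiate in the gradient variable $\tilde\om$ instead of $\phi$, use the convolution structure to pick up the kernel $h$, and control the resulting divergence-form equation in $L^q$ via Meyer's theorem so the estimate extends to some $p>1$. That is exactly the mechanism the paper exploits. But the proposal has two genuine gaps that prevent it from being a proof.

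First, you propose to "invoke Proposition 2.1" directly on the massless infinite-volume field. Proposition 2.1 is presented in the paper only as a \emph{formal} extension (the authors say explicitly in \S2 that they are indicating the generality of the argument, not proving it); it does not address whether $d_\om\Phi$ makes sense, whether $u\in L^2$, or whether the limits commute. The paper's actual proof of Lemma 3.1 never cites Proposition 2.1. Instead it works on a finite periodic cube $Q$ with the \emph{massive} measure (\ref{B3}), applies the Brascamp--Lieb inequality (\ref{D3}) — which produces \emph{two} terms, a $1/\la$ term for the $\na\phi$-derivative and a $1/m^2$ term for the direct $\phi$-derivative — and uses the decomposition $\phi = h_Q^T*\tilde\om + k_Q*\phi$ of (\ref{G3}). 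The $k_Q$ term is crucial: it is what makes the $\phi$-dependence appear at all, it multiplies the dangerous $1/m^2$ factor, and it is only controllable because $\|k_Q\|_q = O(L^{-a(q)})$ so its contribution vanishes when $Q\to\Z^d$ \emph{before} one takes $m\to0$. Your sketch has no analogue of $k_Q$ or of the $1/m^2$ term, so it does not explain why the $m\to 0$ limit survives; it also does not mention Propositions 5.1 and 6.1 of \cite{cs} which are what license the limits $Q\to\Z^d$ and $m\to0$ (the latter requiring $d\ge 3$).

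Second, the $r>1$ bookkeeping is not bookkeeping: the paper introduces the inductive family $F_r$ in (\ref{M3}), the auxiliary functions $u_{r,j}$ in (\ref{P3}) satisfying (\ref{R3}), and the recursion (\ref{W3}) for the sources $f_{r,j}$. The linear factor $r$ in (\ref{A3}) comes from unrolling this recursion (a sum of $r$ comparable terms), not from "iterating a contraction with a constant-order multiplier." More importantly, your statement that "the contraction constant per step is indeed $(1-\la/\La)^{1/2}$" misidentifies the mechanism: each step contracts by $(1-\la/\La)$ but \emph{expands} by the Meyer constant $[1+\del(q)]$; the exponent $r/2$ arises only after choosing $q$ so close to $2$ (depending on $\La/\la$) that $[1+\del(q)](1-\la/\La)\le(1-\la/\La)^{1/2}$. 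This also means the admissible $p_0(\La/\la)$ in (\ref{A3}) is further constrained by $\La/\la$ through $\del(q)$ — it cannot simply be read off from the integrability exponent $d/(d-1)$ of the kernel as your proposal suggests.
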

\begin{proof}
It will be sufficient for us to bound $\|T_{r,\eta}g\|_\infty$ in terms of $\|g\|_p$ for $g:\Z^d\ra\C^d\otimes\C^d$ of finite support. Let $Q$ be a cube in $\Z^d$ containing the support of the function $g(\cdot)$ and  $(\Om_Q,\mathcal{F}_Q,P_{Q,m})$ be the probability space of periodic functions $\phi:Q\ra\R$ with measure
\be \label{B3}
\exp \left[ - \sum_{x\in Q} V\left( \na\phi(x)\right)+\frac{1}{2}m^2\phi(x)^2 \right] \prod_{x\in  Q} d\phi(x)/{\rm normalization},
\ee
where we assume $m>0$ and $V:\R^d\ra\R$ is $C^2$ with  $\mathbf{a}(\cdot)=V''(\cdot)$ satisfying the quadratic form inequality (\ref{B1}).  We denote by $\tilde{\Om}_Q$ the space  of periodic fields $\tilde{\om}:Q\ra\R^d$ and let $F:\tilde{\Om}_Q\times\Om_Q\ra\C$ be a $C^1$ function  which for some constants $A,B$  satisfies the inequality
\be \label{C3}
|F(\tilde{\om},\phi)|+|d_{\tilde{\om}}F(y,\tilde{\om},\phi)|+|d_\phi F(y,\tilde{\om},\phi)|  \ \le \  A\exp[B\{\|\tilde{\om}\|_2+\|\phi\|_2\}] \ ,  \qquad  y\in Q, \ \tilde{\om}\in\tilde{\Om}_Q, \ \phi\in\Om_Q \ .
\ee
Letting $\langle\cdot\rangle_{Q,m}$ denote expectation with respect to the measure (\ref{B3}) we see from  the Brascamp-Lieb inequality \cite{bl} that the Poincar\'{e} inequality
\be \label{D3}
{\rm Var}_{Q,m}[F(\na\phi,\phi)] \ \le \  \frac{2}{\la}\langle \ \|d_{\tilde{\om}} F(\na\phi,\phi)\|^2
 \ \rangle_{Q,m} +\frac{2}{m^2}\langle \ \|d_\phi F(\na\phi,\phi)\|^2
 \ \rangle_{Q,m} \ 
\ee
holds.  We shall show using (\ref{D3}) that $\|T_{r,\eta}g\|_\infty$ is bounded in terms of $\|g\|_p$ if the environment is the probability space  $(\Om_Q,\mathcal{F}_Q,P_{Q,m})$. The result will then follow by taking first $Q\ra\Z^d$ and then $m\ra 0$. 

Let us suppose that the cube $Q$ is centered at the origin in $\Z^d$ with side of length $L$, where $L$ is an even integer. Let  $G_\eta:\Z^d\ra\R$ be the solution to  (\ref{H1}) with $V''(\cdot)=I_d$ and $G_{\eta,Q}:Q\ra\R$ the corresponding Green's function for the periodic lattice $Q$, so
\be \label{E3}
G_{\eta,Q}(x) \ = \ \sum_{n\in\Z^d} G_\eta(x+Ln) \ , \quad x\in Q.
\ee
Then any periodic function $\phi:Q\ra\R$ can be written as
\be \label{F3}
\phi(x) \ = \ \sum_{y\in Q}[ \na G_{\eta,Q}(x-y)]^T\na\phi(y)+  \sum_{y\in Q} \eta G_{\eta,Q}(x-y)\phi(y) \ , \quad x\in Q.
\ee
Taking $\eta=1/L^2$ in (\ref{F3}) we have a representation
\begin{multline} \label{G3}
\phi(\cdot) \ = \ h^T_Q*\tilde{\om}(\cdot)+k_Q*\phi(\cdot), \quad {\rm where \ } \|h_Q\|_q\le \  C_q \ {\rm for \ } q>d/(d-1), \\
{\rm and \ } \|k_Q\|_q \ \le \ C_q/\min[L^{d(1-1/q)},L^2] \quad {\rm for \ } q\ge 1 \ {\rm and \ } q\ne d/(d-2) \ .
\end{multline}
In (\ref{G3}) the vector  $h_Q=[h_{Q,1},...,h_{Q,d}]$ is a column vector,  the operation $*$ denotes convolution on the periodic lattice $Q$, and $C_q$ is a constant depending only on $q,d$. 

We first prove (\ref{A3}) when $r=1$. For the environment $(\Om_Q,\mathcal{F}_Q,P_{Q,m})$ we have from (\ref{G3})  that
\be \label{H3}
T_{1,\eta} g(\xi,\phi) \ = \  \sum_{x\in Q} g(x)e^{-ix\cdot\xi}P\tilde{\mathbf{b}}\big(h^T_Q*\tilde{\om}(x)+k_Q*\phi(x)\big) \ .
\ee
Let $\mathcal{H}_m(Q)$ be the Hilbert space of functions $f:\Om_Q\ra\C^d$ which are square integrable with respect to the measure $P_{Q,m}$.  It follows  from (\ref{D3}) that if $v\in\C^d$   the norm of $T_{1,\eta} g(\xi,\cdot)v\in\mathcal{H}_m(Q)$ is bounded as
\be \label{I3}
\|T_{1,\eta} g(\xi,\cdot)v\|^2 \ \le \  \frac{2}{\la}\sum_{z\in Q} \sum_{j=1}^d\|\sum_{x\in Q}g(x)h_{Q,j}(x-z)D\tilde{\mathbf{b}}(\phi(x))v\|^2+
\frac{2}{m^2}\sum_{z\in Q} \|\sum_{x\in Q} g(x)k_Q(x-z)D\tilde{\mathbf{b}}(\phi(x))v\|^2 \ .
\ee
Since $d\ge 3$ we can choose $q$ such that $d/(d-1)<q<2$ and $q\ne d/(d-2)$.  It then follows  from (\ref{G3}), (\ref{I3})  that for $p=2q/(3q-2)>1$ 
\be \label{J3}
\|T_{1,\eta} g(\xi,\cdot)v\|^2 \ \le \  C_q\|g\|^2_p\|D\tilde{\mathbf{b}}(\cdot)\|^2_\infty|v|^2\left[\frac{1}{\la}+\frac{1}{m^2 L^{a(q)}}\right] \ ,
\ee
where $a(q)=2\min[d(1-1/q),2]$. Let $(\Om,\mathcal{F},P_m)$ be the probability space of fields $\phi:\Z^d\ra\R$ with measure $P_m$ given by (\ref{G1}).  Proposition 5.1 of \cite{cs} enables us to take the limit of (\ref{J3}) as $Q\ra\Z^d$ to obtain the inequality
 \be \label{K3}
\|T_{1,\eta} g(\xi,\cdot)v\|^2 \ \le \  C_q\|g\|^2_p\|D\tilde{\mathbf{b}}(\cdot)\|^2_\infty|v|^2\big/\la \ 
\ee
for the environment $(\Om,\mathcal{F},P_m)$. Finally Proposition 6.1 of \cite{cs} enables us to take the limit of (\ref{K3}) as $m\ra 0$ provided $d\ge 3$.  We have proved (\ref{A3}) when $r=1$. 

To prove the result for $r>1$ we consider the environment $(\Om_Q,\mathcal{F}_Q,P_{Q,m})$  and  write as in \cite{cs}
\be \label{L3}
T_{r,\eta}g(\xi,\phi(\cdot))v \ = \ P\sum_{x\in Q} g(x) e^{-ix.\xi}\tilde{\mathbf{b}}(\phi(x))\pa_\xi F_r(\tau_x\phi(\cdot)) \ ,
\quad \phi(\cdot)\in\Om_Q,
\ee
where the functions $F_r(\phi(\cdot))$ are defined inductively by
\begin{eqnarray} \label{M3}
\frac{\eta}{\La}F_r(\phi(\cdot))+\pa_\xi^*\pa_\xi F_r(\phi(\cdot))&=&P\pa^*_\xi [\tilde{{\bf b}}(\phi(0))\pa_\xi F_{r-1}(\phi(\cdot))],  \ r> 2,  \\
\frac{\eta}{\La}F_2(\phi(\cdot))+\pa_\xi^*\pa_\xi F_2(\phi(\cdot))&=&P\pa^*_\xi [\tilde{{\bf b}}(\phi(0))v]  \ .
\nonumber
\end{eqnarray}
It is easy to see that $\pa_\xi F_r\in\mathcal{H}_m(Q)$ and
\be \label{N3}
\|\pa_\xi F_r\| \ \le \ (1-\la/\La)^{r-1}|v|  \quad {\rm for \ } r\ge 2.
\ee
Using the representation (\ref{G3}) for  $\phi(\cdot)$ we can consider the $F_r, \ r\ge 2,$ defined by (\ref{M3}) as  functions of $\tilde{\om}(\cdot)$ and $\phi(\cdot)$, which we denote by  $F_r(\tilde{\om},\phi)$.    Observe now that for $1\le j\le d$,
\begin{multline} \label{O3}
\frac{\pa}{\pa\tilde{\om}_j(z)}\sum_{x\in Q} g(x) e^{-ix.\xi}\tilde{\mathbf{b}}(h_Q^T*\tilde{\om}(x)+k_Q*\phi(x))\pa_\xi F_r(\tau_x\tilde{\om},\tau_x\phi)  \ = \\ 
\sum_{x\in Q} g(x) e^{-ix.\xi}h_{Q,j}(x-z)D\tilde{\mathbf{b}}(\phi(x))\pa_\xi F_r(\tau_x\phi(\cdot)) 
+  \sum_{x\in Q} g(x) e^{-ix.\xi}\tilde{\mathbf{b}}(\phi(x))\frac{\pa}{\pa\tilde{\om}_j(z)}\pa_\xi F_r(\tau_x\tilde{\om},\tau_x\phi)  \ .
\end{multline}
Let $u_{r,j}:Q\times\Om_Q\ra\C$ be given by the formula
\be \label{P3}
u_{r,j}(z,\phi(\cdot)) \ = \ e^{-iz\cdot\xi}\sum_{y\in Q} d_\phi F_r(y,\tau_z\phi(\cdot))h_{Q,j}(y+z) \ .
\ee
Then as in (\ref{D2}), (\ref{G2}) we have that
\begin{multline} \label{Q3}
\na u_{r,j}(x-z,\phi(\cdot)) \ = \ e^{i(z-x)\cdot\xi}\sum_{y\in Q} \tau_{-z}\frac{\pa}{\pa\phi(y)}\pa_\xi F_r(\tau_x\phi(\cdot))h_{Q,j}(y-z)  \\
= \ e^{i(z-x)\cdot\xi} \tau_{-z}\frac{\pa}{\pa\tilde{\om}_j(z)}\pa_\xi F_r(\tau_x\tilde{\om},\tau_x\phi)  \ .
\end{multline} 
Similarly to (\ref{J2}), (\ref{K2}) we see that $u_{r,j}(z,\phi(\cdot))$ satisfies the equation 
\be \label{R3}
\frac{\eta}{\La} \ u_{r,j}(z,\phi(\cdot))+\na^*\na u_{r,j}(z,\phi(\cdot)) \ = \ P\na^*f_{r,j}(z,\phi(\cdot)) \ ,
\ee
where the function $f_{r,j}:Q\times\Om_Q\ra\C^d$ is given by the formula
\begin{multline} \label{S3}
f_{2,j}(z,\phi) \ = \  D\tilde{{\bf b}}(\phi(z))v e^{-iz\cdot\xi} h_{Q,j}(z) \ , \\
f_{r,j}(z,\phi) \ = \  D\tilde{{\bf b}}(\phi(z))\pa_\xi F_{r-1}(\tau_z\phi(\cdot)) e^{-iz\cdot\xi} h_{Q,j}(z)+\tilde{{\bf b}}(\phi(z))\na u_{r-1,j}(z,\phi(\cdot)), \ r>2.
\end{multline}

From (\ref{O3}), (\ref{Q3}) we see that
\begin{multline} \label{T3}
\frac{1}{2}\big\|\frac{\pa}{\pa\tilde{\om}_j(z)}\sum_{x\in Q} g(x) e^{-ix.\xi}\tilde{\mathbf{b}}(h_Q^T*\tilde{\om}(x)+k_Q*\phi(x))\pa_\xi F_r(\tau_x\tilde{\om},\tau_x\phi)\big\|^2 \ \le  \\
\big\|\sum_{x\in Q} g(x) e^{-ix.\xi}h_{Q,j}(x-z)D\tilde{\mathbf{b}}(\phi(x-z))\pa_\xi F_r(\tau_{x-z}\phi(\cdot))\big\|^2 +\big\|\sum_{x\in Q} g(x)e^{-iz\cdot\xi} \tilde{\mathbf{b}}(\phi(x-z))\na u_{r,j}(x-z,\phi(\cdot))\big\|^2 \ .
\end{multline}
Observe now from (\ref{N3}) and Young's inequality for functions with values in a Hilbert space that
\begin{multline} \label{U3}
\sum_{z\in Q} \big\|\sum_{x\in Q} g(x) e^{-ix.\xi}h_{Q,j}(x-z)D\tilde{\mathbf{b}}(\phi(x-z))\pa_\xi F_r(\tau_{x-z}\phi(\cdot))\big\|^2  \\
\le \ C\left[\|D\tilde{\mathbf{b}}(\cdot)\|_\infty \|g\|_p\|h_{Q,j}\|_q (1-\la/\La)^{r-1}|v|\right]^2 \ ,
\end{multline}
where $p=2q/(3q-2)$ with $1\le q\le 2$ and $C$ depends only on $d$. We can bound the second term on the RHS of (\ref{T3}) similarly. Thus let $L^q(Q,\mathcal{H}_m(Q))$ be the Banach space of functions $f:Q\ra \mathcal{H}_m(Q)$ with norm
\be \label{V3}
\|f\|_q^q \ =  \ \sum_{x\in Q} \|f(x)\|^q \ .
\ee
From (\ref{S3}) it follows that
\begin{multline} \label{W3}
\|f_{2,j}\|_q \ \le \  C\|D\tilde{\mathbf{b}}(\cdot)\|_\infty\|h_{Q,j}\|_q |v| \ , \\
\|f_{r,j}\|_q \ \le \ C\|D\tilde{\mathbf{b}}(\cdot)\|_\infty\|h_{Q,j}\|_q(1-\la/\La)^{r-2} |v|+
(1-\la/\La)\|\na u_{r-1,j}\|_q \  \quad {\rm if \ } r>2,
\end{multline}
where $C$ depends only on $d$. We see from the Hilbert space version of  the Calderon-Zygmund theorem  (see \cite{stein} page 45) applied to (\ref{R3}) that
for $q>1$ there is a constant $\del(q)\ge 0$ such that
\be \label{X3}
\|\na u_{r,j}\|_q \ \le \ [1+\del(q)]\|f_{r,j}\|_q \quad {\rm and \ } \lim_{q\ra 2} \del(q)=0.
\ee
It follows then from (\ref{W3}), (\ref{X3}) that
\be \label{Y3}
\| f_{r,j}\|_q \ \le \  Cr\|D\tilde{\mathbf{b}}(\cdot)\|_\infty\|h_{Q,j}\|_q[1+\del(q)]^{r-2}(1-\la/\La)^{r-2} |v| \ ,
\ee
where $C$ depends only on $d$. Now Young's inequality and (\ref{X3}), (\ref{Y3}) imply that
\begin{multline}  \label{Z3}
\sum_{z\in Q}\big\|\sum_{x\in Q} g(x)e^{-iz\cdot\xi} \tilde{\mathbf{b}}(\phi(x-z))\na u_{r,j}(x-z,\phi(\cdot))\big\|^2  \\
\le \ C\left[r\|D\tilde{\mathbf{b}}(\cdot)\|_\infty \|g\|_p\|h_{Q,j}\|_q [1+\del(q)]^{r-1}(1-\la/\La)^{r-1}|v|\right]^2  \ ,
\end{multline}
where $p=2q/(3q-2)$ with $1\le q\le 2$ and $C$ depends only on $d$.  We can argue now as in the $r=1$ case to establish (\ref{A3}) for $r\ge 2$ by choosing $q<2$ to satisfy $[1+\del(q)](1-\la/\La)\le (1-\la/\La)^{1/2}$. 
\end{proof}

\end{document}